\newtheorem{theorem}{Theorem}[section]
\newtheorem{lemma}[theorem]{Lemma}
\theoremstyle{definition}
\newcommand{\cQ}{{\mathcal Q}}
\newcommand{\cP}{{\mathcal P}}
\newcommand{\cR}{{\mathcal R}}
\newcommand{\Cb}{{\mathbb C}}
\newcommand{\Zb}{{\mathbb Z}}
\newcommand{\Nb}{{\mathbb N}}
\newcommand{\Sym}{{\rm Sym}}
\newcommand{\Hom}{{\rm Hom}}
\begin{document}

\title[Bernoulli actions and infinite entropy]{Bernoulli actions and infinite entropy}

\author{David Kerr}
\author{Hanfeng Li}
\address{\hskip-\parindent
David Kerr, Department of Mathematics, Texas A{\&}M University,
College Station TX 77843-3368, U.S.A.}
\email{kerr@math.tamu.edu}

\address{\hskip-\parindent
Hanfeng Li, Department of Mathematics, SUNY at Buffalo,
Buffalo NY 14260-2900, U.S.A.}
\email{hfli@math.buffalo.edu}

\date{May 27, 2010}

\begin{abstract}
We show that, for countable sofic groups, a Bernoulli action with infinite entropy base has infinite entropy
with respect to every sofic approximation sequence. This builds on the work of Lewis Bowen in the case of finite
entropy base and completes the computation of measure entropy for Bernoulli actions over countable sofic groups.
One consequence is that such a Bernoulli action fails to have a generating countable partition
with finite entropy if the base has infinite entropy, which in the amenable case is well known 
and in the case that the acting group contains the
free group on two generators was established by Bowen using a different argument.
\end{abstract}

\maketitle

\section{Introduction}

In \cite{Bow10} Lewis Bowen introduced a notion of entropy for measure-preserving actions of countable
sofic groups admitting a generating partition with finite entropy. This measure entropy is defined relative
to a given sofic approximation sequence for the group and thus yields a collection of numerical invariants in general.
For a Bernoulli action with finite base entropy, Bowen showed that the sofic measure entropy
is equal to the base entropy for every choice of sofic approximation sequence. As a consequence
he was able to extend the entropy classification of Bernoulli actions in the amenable setting, 
due to Ornstein for $\Zb$ and to Ornstein and Weiss more generally,
to the class of all countable sofic groups having the property that any two Bernoulli actions
with the same base entropy are conjugate, which includes all countable sofic groups
containing an element of infinite order.
Moreover, Bowen demonstrated that Bernoulli actions with nontrivial bases over a given countable sofic
group containing the free group $F_2$ are all weakly isomorphic \cite{Bow10b}, which enabled him to
conclude in \cite{Bow10} that Bernoulli actions with infinite entropy base over a countable sofic group
containing $F_2$ do not admit a generating partition with finite entropy, thereby answering a question of Weiss.

In \cite{KerLi10T} the present authors extended Lewis Bowen's sofic measure entropy to general
measure-preserving actions of a countable sofic group on a standard probability space by recasting
the definition in operator-algebraic terms, much in the spirit of Rufus Bowen's approach to topological
entropy for $\Zb$-actions, which replaces the analysis of set intersections with the counting of
$\varepsilon$-separated partial orbits. The main goal of \cite{KerLi10T} was to introduce a topological
version of sofic measure entropy and establish a variational principle relating the two.
In this note we respond to a question that Bowen posed to the authors 
by showing that, for countable sofic groups, a Bernoulli action with infinite entropy base has 
infinite entropy with respect to every sofic approximation sequence.
The argument makes use of Bowen's finite entropy lower bound and
is carried out by representing the dynamics in a topological way.
It follows that the entropy of any Bernoulli action
of a countable sofic group over a standard base is equal to the base entropy, independently
of the sofic approximation sequence (Theorem~\ref{T-entropy of Bernoulli}). As a consequence,
conjugate Bernoulli actions of a countable sofic group over standard bases have the same
base entropy (Theorem~\ref{T-isomorphism of Bernoulli}), a fact that Bowen proved under the assumption
that the bases have finite entropy or that the group is Ornstein, i.e., has the property that
any two Bernoulli actions with the same base entropy are conjugate. Another consequence is that
a Bernoulli action of a countable sofic group with an infinite entropy standard base does not
admit a generating measurable partition with finite entropy (Theorem~\ref{T-countable generator}),
which is well known for countable amenable groups and, as mentioned above, was established by Bowen for
countable sofic groups containing $F_2$ using a different argument based on the weak isomorphism of
Bernoulli actions. Note that there exist countable sofic groups which are not
amenable and do not contain $F_2$. Indeed Ershov showed the existence of a countable nonamenable 
residually finite torsion group \cite[Cor.\ 8.5]{Ers08} (see also \cite{Osi09}).

We now recall some terminology and notation pertaining to sofic measure entropy. 
We refer the reader to \cite{KerLi10T} for more details. Let $G$ be a countable sofic group.
Let $\Sigma = \{ \sigma_i : G\to\Sym (d_i )\}_{i=1}^\infty$
be a sofic approximation sequence for $G$, i.e., $\{ d_i \}_{i=1}^\infty$ is a sequence of
positive integers satisfying $\lim_{i\to\infty} d_i = \infty$ and the maps $\sigma_i$ into the
permutation groups $\Sym (d_i )$ are asymptotically multiplicative and free in the sense that
\[
\lim_{i\to\infty}
\frac{1}{d_i} \big| \{ k\in \{ 1,\dots ,d_i \} : \sigma_{i,st} (k) = \sigma_{i,s} \sigma_{i,t} (k) \} \big| = 1
\]
for all $s,t\in G$ and
\[
\lim_{i\to\infty}
\frac{1}{d_i} \big| \{ k\in \{ 1,\dots ,d_i \} : \sigma_{i,s} (k) \neq \sigma_{i,t} (k) \} \big| = 1
\]
for all distinct $s,t\in G$.
The sofic measure entropy $h_{\Sigma ,\mu} (X,G)$ of a measure-preserving action of $G$
on a standard probability space $(X,\mu )$
with respect to $\Sigma$ is defined, roughly speaking, 
by measuring the exponential growth as $i\to\infty$ of the maximal cardinality of
$\varepsilon$-separated sets of approximately equivariant, approximately multiplicative, and
approximately measure-preserving maps from $L^\infty (X,\mu )$ into $\Cb^{d_i}$, where the latter
is equipped with the uniform probability measure. Instead of recalling the precise definition, which can be found as
Definition~2.2 in \cite{KerLi10T}, we will give here an equivalent formulation more suited to
our purpose, which requires us to endow our measure-theoretic framework with topological structure
in order to facilitate certain approximations. We thus suppose that
$X$ is a compact metrizable space, $\mu$ is a Borel probability measure on $X$, 
and $\alpha$ is a continuous measure-preserving action of $G$ on $X$.
The notation $\alpha$ will actually be reserved for the induced
action on $C(X)$, so that $\alpha_g (f)$
for $f\in C(X)$ and $g\in G$ will mean the function $x\mapsto f(g^{-1} x)$, 
with concatenation being used for the action on $X$.
Let $\cP$ be a finite partition of unity in $C(X)$ and let $d\in\Nb$.
On the set of unital homomorphisms from $C(X)$ to $\Cb^d$
we define the pseudometric
\begin{align*}
\rho_{\cP ,\infty} (\varphi , \psi ) &= \max_{p\in\cP} \| \varphi (p) - \psi (p) \|_\infty .
\end{align*}
Let $\sigma$ be a map from $G$ to the permutation group $\Sym (d)$ of the set $\{ 1,\dots ,d \}$.
We also use $\sigma$ to denote
the induced action on $\Cb^d \cong C(\{ 1,\dots ,d \} )$, i.e., for $f\in\Cb^d$ and $s\in G$ we write
$\sigma_s (f)$ to mean $f\circ\sigma_s^{-1}$.
We write $\zeta$ for the uniform probability measure on $\{ 1,\dots ,d \}$, and
$\| \cdot \|_2$ for the norm $f \mapsto \zeta (|f|^2 )^{1/2}$ on $\Cb^d$.
Let $F$ be a nonempty finite subset of $G$, $m\in\Nb$ and $\delta > 0$.
Define $\Hom_\mu^X (\cP , F,m,\delta ,\sigma )$ to be the set of all unital homomorphisms
$\varphi : C(X) \to \Cb^d$ such that
\begin{enumerate}
\item[(i)]
$| \zeta\circ\varphi (f) - \mu (f) | < \delta$ for all $f\in\cP_{F,m}$,

\item[(ii)]
$\| \varphi\circ\alpha_s (f) - \sigma_s \circ\varphi (f) \|_2 < \delta$ for all $f\in\cP$ and $s\in F$,
\end{enumerate}
where $\cP_{F,m}$ denotes the set of all products of the form
$\alpha_{s_1} (p_1 )\cdots \alpha_{s_j} (p_j )$ where $1\leq j\leq m$,
$p_1 , \dots p_j \in\cP$, and $s_1 , \dots ,s_j \in F$.
The measure-preserving version of Proposition~4.11 in \cite{KerLi10T} (which can be established
using the same argument) and the discussion in Section~5 of \cite{KerLi10T} together show that if $\cP$ is dynamically generating 
in the sense that there is no proper $G$-invariant unital $C^*$-subalgebra of $C(X)$ containing $\cP$, then 
\[
h_{\Sigma, \mu}(X,G)=\sup_{\varepsilon>0}\inf_F\inf_{m\in \Nb}\inf_{\delta>0}
\limsup_{i\to\infty} \frac{1}{d_i} \log N_{\varepsilon} (\Hom_{\mu}^{X} (\cP ,F,m,\delta ,\sigma_i ),\rho_{\cP,\infty} )
\]
where $N_\varepsilon (\cdot ,\rho_{\cP,\infty} )$
denotes the maximal cardinality of an $\varepsilon$-separated subset with respect to the pseudometric $\rho_{\cP,\infty}$
and $F$ ranges over all nonempty finite subsets of $G$.
\medskip

\noindent{\it Acknowledgements.}
The first author was partially supported by NSF grant DMS-0900938,
and the second author partially supported by NSF grant DMS-0701414.
We thank Lewis Bowen for pointing out the references \cite{Ers08,Osi09}.

\section{Results}\label{S-main}

Let $G$ be a countable sofic group and $(X,\mu )$ a standard probability space. 
Taking the product Borel structure on $X^G$ and the product measure $\mu^G$, we obtain a standard probability 
space $(X^G ,\mu^G )$ on which $G$ acts by the shifts $g\cdot (x_h )_{h\in G} = (x_{g^{-1} h} )_{h\in G}$, 
and we refer to this as a Bernoulli action.

The following lemma is a direct consequence of (and is easily seen to be equivalent to)
the lower bound for the sofic entropy of a Bernoulli action with finite entropy base that is part of
Theorem~8.1 in \cite{Bow10}. Recall that the entropy of a measurable partition $\cQ$ of a probability space 
$(X,\mu )$ is defined by
\[
H_\mu (\cQ) = - \sum_{Q\in\cQ} \mu (Q) \log \mu (Q) .
\]

\begin{lemma} \label{L-finite lower bound}
Let $G$ be a countable sofic group.
Let $\Sigma = \{ \sigma_i : G\to\Sym (d_i ) \}_{i=1}^\infty$ be a sofic approximation sequence for $G$.
Let $(X, \mu)$ be a standard probability space. Let $\cR$ be a finite measurable partition of
$X$. Then for every nonempty finite set $F\subseteq G$ and $\delta>0$ one has
\begin{align*}
\lefteqn{\limsup_{i\to \infty}\frac{1}{d_i}\log \bigg|\bigg\{\beta\in \cR^{\{1, \dots, d_i\}}:} \hspace*{10mm} \\
&\hspace*{10mm} \sum_{\varphi \in \cR^F}\bigg|\prod_{g\in F}\mu(\varphi(g))
-\zeta\bigg(\bigcap_{g\in F}\sigma_i(g)\beta^{-1}(\varphi(g))\bigg)\bigg|\le \delta\bigg\}\bigg|
\ge H_{\mu}(\cR).
\end{align*}
\end{lemma}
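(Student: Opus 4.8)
The plan is to derive this lemma as a reformulation of the finite-entropy lower bound from Bowen's Theorem 8.1. The statement asserts that the number of "good" labelings $\beta \in \mathcal{R}^{\{1,\dots,d_i\}}$—those for which the empirical distribution of $F$-patterns under the sofic action $\sigma_i$ closely matches the product measure—grows at least like $\exp(d_i H_\mu(\mathcal{R}))$. My first step is to recognize that such a labeling $\beta$ is precisely the combinatorial data underlying an approximately measure-preserving, approximately equivariant map into $\mathbb{C}^{d_i}$ for the Bernoulli action: given $\beta$, one assigns to each coordinate $k \in \{1,\dots,d_i\}$ the value $\beta(k) \in \mathcal{R}$, and the quantity $\zeta(\bigcap_{g\in F} \sigma_i(g)\beta^{-1}(\varphi(g)))$ measures the $\zeta$-fraction of coordinates $k$ whose $F$-translated labels spell out the pattern $\varphi$. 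The condition that this empirical $F$-dimensional marginal is within $\delta$ (in total variation over $\varphi \in \mathcal{R}^F$) of the product marginal $\prod_{g\in F}\mu(\varphi(g))$ is exactly the microstate condition appearing in Bowen's definition of sofic entropy for a Bernoulli action with base partition $\mathcal{R}$.

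Next I would make the translation between Bowen's formalism and the quantities here explicit. In Bowen's setup the base probability space carries the partition $\mathcal{R}$, and the product/Bernoulli structure means the finite-entropy base entropy is $H_\mu(\mathcal{R})$. Bowen's Theorem 8.1 gives, as its lower-bound half, that the exponential growth rate of the number of good microstates is at least the base entropy; since for a Bernoulli action the relevant generating partition pulls back to the coordinate partition and the base entropy equals $H_\mu(\mathcal{R})$, the lower bound reads exactly $H_\mu(\mathcal{R})$. The main task is therefore bookkeeping: to verify that the set of $\beta$ satisfying the displayed total-variation constraint coincides, up to the standard passage between partitions and homomorphisms and between Bowen's intersection-counting and the present $\zeta$-notation, with the microstate space whose cardinality Bowen bounds below. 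One checks that the product $\prod_{g\in F}\mu(\varphi(g))$ is the $\mu^G$-measure of the cylinder set indexed by the pattern $\varphi$, confirming that the empirical-versus-true marginal comparison is the genuine approximate-measure-preservation requirement.

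I expect the only real subtlety to be aligning conventions: matching the shift direction $g \cdot (x_h) = (x_{g^{-1}h})$ with the appearance of $\sigma_i(g)\beta^{-1}(\cdot)$ (as opposed to $\sigma_i(g)^{-1}$), confirming that restricting attention to the single finite set $F$ (rather than all of $\mathcal{P}_{F,m}$) still captures the content of Bowen's lower bound for a Bernoulli action, and checking that the total-variation tolerance $\delta$ here matches the $\ell^1$-type tolerance in Bowen's microstate definition after any rescaling. Once these identifications are in place, the inequality $\ge H_\mu(\mathcal{R})$ is simply Bowen's lower bound transcribed. The hard part is thus not an argument but a careful dictionary; the lemma is, as the text says, a direct consequence of and equivalent to the finite-entropy lower bound in Theorem~8.1 of \cite{Bow10}, so the proof amounts to pinning down that equivalence precisely.
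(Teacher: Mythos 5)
Your proposal takes exactly the route the paper does: the paper offers no argument for this lemma beyond the sentence preceding it, which states that it is a direct consequence of (and equivalent to) the lower bound in Theorem~8.1 of Bowen's paper, and your reading of the set of good labelings $\beta$ as Bowen's microstate space for the Bernoulli action with base partition $\cR$ is the correct dictionary. Your outline of the translation (cylinder measures, empirical $F$-marginals, convention matching) is consistent with what that reduction requires, so the approach matches the paper's.
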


\begin{lemma} \label{L-lower bound}
Let $G$ be a countable sofic group. Let $\Sigma = \{ \sigma_i : G\to\Sym (d_i ) \}_{i=1}^\infty$ be a sofic approximation sequence for $G$.
Let $(X, \mu)$ be a standard probability space. Let $\cQ$ be a finite measurable partition of
$X$. Then $h_{\Sigma, \mu^G}(X^G, G)\ge H_{\mu}(\cQ)$.
\end{lemma}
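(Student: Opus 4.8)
The plan is to realize the base topologically and to encode the ``good labelings'' produced by Lemma~\ref{L-finite lower bound} as unital homomorphisms $C(X^G)\to\Cb^{d_i}$, exploiting the fact that such homomorphisms are exactly the ones given by $d_i$-tuples of points of $X^G$. First I would fix a compact metrizable model for $(X,\mu)$ and a finite partition of unity $\cP_0=\{p_a\}$ in $C(X)$ subordinate to $\cQ=\{Q_a\}$, chosen so that each $p_a$ is close to the indicator $\mathbf 1_{Q_a}$ in $L^1(\mu)$ and so that a representative point $x_a\in Q_a$ can be selected with $p_a(x_a)\approx 1$ and $p_b(x_a)\approx 0$ for $b\neq a$. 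Pulling these functions back through the projection of $X^G$ to its identity coordinate yields a partition of unity $\cP$ in $C(X^G)$, which is dynamically generating because the zeroth-coordinate functions generate $C(X^G)$ under the shift. With this $\cP$ the topological formula for $h_{\Sigma,\mu^G}(X^G,G)$ recalled in the introduction applies.

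Given $F$, $m$ and $\delta$, I would build homomorphisms as follows. For $\beta\in\cQ^{\{1,\dots,d_i\}}$ let $\varphi_\beta$ be the unital homomorphism determined by the points $y_1^\beta,\dots,y_{d_i}^\beta\in X^G$ whose $h$-coordinate is the representative $x_{\beta(\sigma_{i,h}^{-1}(k))}$. A direct index computation shows that $\varphi_\beta\circ\alpha_s(p)$ and $\sigma_{i,s}\circ\varphi_\beta(p)$ agree at every coordinate $k$ satisfying $\sigma_{i,e}(k)=k$ and $\sigma_{i,sh}(k)=\sigma_{i,s}\sigma_{i,h}(k)$, so by asymptotic multiplicativity and freeness they differ on a set of coordinates of size $o(d_i)$; hence condition~(ii) holds for all large $i$. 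For condition~(i), the value $\zeta\circ\varphi_\beta(f)$ of a product $f=\alpha_{s_1}(p_1)\cdots\alpha_{s_j}(p_j)\in\cP_{F,m}$ is an empirical average over $k$ of $\prod_l p_l(y^\beta_{k,s_l})$, which, were the $p_l$ the indicators $\mathbf 1_{Q_{a_l}}$, would be exactly the empirical frequency of the corresponding cylinder pattern, while $\mu^G(f)$ would be $\prod_l\mu(Q_{a_l})$. Applying Lemma~\ref{L-finite lower bound} to $\cR=\cQ$ with the set $F$ and a tolerance $\delta'$ chosen small relative to $\delta$ and to the distances $\|p_a-\mathbf 1_{Q_a}\|_1$, I would conclude that the labelings $\beta$ satisfying the resulting $\ell^1$ estimate already force $\varphi_\beta\in\Hom_{\mu^G}^{X^G}(\cP,F,m,\delta,\sigma_i)$, and that their number has exponential growth rate at least $H_\mu(\cQ)$ along a suitable subsequence.

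Finally I would count separated homomorphisms. Because $\rho_{\cP,\infty}$ is a coordinatewise supremum, $\varphi_\beta$ and $\varphi_{\beta'}$ are $\varepsilon$-separated as soon as there is a single $k$ with $\sigma_{i,e}(k)=k$ and $\beta(k)\neq\beta'(k)$: the $e$-coordinates are then the separated representatives $x_{\beta(k)}$ and $x_{\beta'(k)}$, and the element $p_{\beta(k)}\in\cP$ distinguishes them by approximately $1$. Thus two admissible labelings fail to be $\varepsilon$-separated only if they agree off the set $\{k:\sigma_{i,e}(k)\neq k\}$, which has size $o(d_i)$, so each such class contains at most $|\cQ|^{o(d_i)}=e^{o(d_i)}$ labelings. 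A maximal $\varepsilon$-separated subset therefore has exponential growth rate at least $H_\mu(\cQ)$, whence $\limsup_{i\to\infty}\frac{1}{d_i}\log N_\varepsilon(\Hom_{\mu^G}^{X^G}(\cP,F,m,\delta,\sigma_i),\rho_{\cP,\infty})\ge H_\mu(\cQ)$. Since this holds for every $F$, $m$, $\delta$ and every $\varepsilon>0$, the topological formula gives $h_{\Sigma,\mu^G}(X^G,G)\ge H_\mu(\cQ)$.

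The main obstacle I expect is the passage between the measurable partition $\cQ$ and the continuous partition of unity $\cP$: one must choose the compact model and the functions $p_a$ so that, simultaneously, $\cP$ stays dynamically generating, the replacement of indicators by the $p_a$ in condition~(i) is absorbed into $\delta$, and the representatives $x_a$ stay separated enough for the counting step. Lining up the index conventions in the definition of $y_k^\beta$ with the direction of the shift and with the formulation of Lemma~\ref{L-finite lower bound} is the other point requiring care, though it is routine once set up.
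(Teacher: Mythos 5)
Your overall strategy --- realize the base topologically, turn the labelings counted by Lemma~\ref{L-finite lower bound} into unital homomorphisms via tuples of points with prescribed coordinates, and count $\varepsilon$-separated ones --- is indeed the paper's strategy. But there is a genuine gap at exactly the place you flag as ``the main obstacle,'' and it is not a routine technicality: it is the reason the paper's proof has the shape it does. The partition of unity $\cP$ must be fixed, and dynamically generating, \emph{before} the infima over $F$, $m$ and $\delta$ are taken. If the $p_a$ are (essentially) the indicators of the atoms of $\cQ$, they are constant on those atoms and generate only a proper $G$-invariant unital $C^*$-subalgebra of $C(X^G)$, so the formula from the introduction does not apply. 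If instead you perturb them by a fixed amount $\epsilon_0$ so that they jointly separate points of $X$, then the functions in $\cP_{F,m}$ genuinely vary on the cylinder sets $Y_\varphi$, $\varphi\in\cQ^F$, and the empirical average $\zeta(\varphi_\beta(f))$, being supported on points built from a single representative per atom of $\cQ$, cannot approximate $\mu^G(f)$ to within an arbitrary $\delta$ for arbitrary $m$: already for $f=p_a^m$ the discrepancy between $\int_{Q_a}p_a^m\,d\mu$ and $\mu(Q_a)\,p_a(x_a)^m$ is of order $\mu(Q_a)$ once $m\gg 1/\epsilon_0$. So condition (i) fails for your $\varphi_\beta$ when $m$ is large and $\delta$ small, and applying Lemma~\ref{L-finite lower bound} with $\cR=\cQ$ cannot close the argument.

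The paper resolves this by realizing $X$ as a closed subset of $[0,1]$ with the atoms of $\cQ$ having disjoint closures, taking $\cP=\{p,1-p\}$ with $p$ the (injective, hence generating) coordinate function, and then, for the given $F,m,\delta$, refining $\cQ$ to a partition $\cR$ whose atoms have diameter less than an $\eta=\eta(F,m,\delta)$, with one representative point per atom of $\cR$; members of $\cP_{F,m}$ are then nearly constant on the cylinders $Y_\varphi$, $\varphi\in\cR^F$, which is what makes condition (i) verifiable. The price is that Lemma~\ref{L-finite lower bound} is applied to $\cR$, while $\varepsilon$-separation in $\rho_{\cP,\infty}$ only detects the $\cQ$-class of a labeling; one must then show that each non-separated class contains at most roughly $\exp\big((H_\mu(\cR)-H_\mu(\cQ)+\kappa)d\big)$ labelings, via a multinomial/Stirling estimate that uses the near-equidistribution of $\beta$ over the atoms of $\cR$, in order to recover the bound $H_\mu(\cQ)$. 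This counting step is the technical heart of the paper's proof and is entirely absent from your proposal; your $|\cQ|^{o(d_i)}$ bound on non-separated classes would be correct only in the (unavailable) setting where the labelings are $\cQ$-valued.
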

\begin{proof}
We can identify $X$ with a Borel subset of $[0, 1]$ such that the closures of the atoms of $\cQ$ are pairwise disjoint.
Then $\mu$ extends to a Borel probability measure $\nu$ on $\bar{X}$ such that $\mu(A)=\nu(A)$ for every measurable
subset $A$ of $X$. It follows that the Bernoulli actions $(G, X^G, \mu^G)$ and $(G, \overline{X}^G, \nu^G)$ are measurably
isomorphic. Thus $h_{\Sigma, \mu^G}(X^G, G)=h_{\Sigma, \nu^G}(\bar{X}^G, G)$.
Denote by $\overline{\cQ}$ the partition of $\overline{X}$ consisting of $\overline{Q}$ for $Q\in \cQ$.
Then $H_{\mu}(\cQ)=H_{\nu}(\bar{\cQ})$.
Thus we may replace $X$ and $\cQ$ by $\overline{X}$ and $\overline{\cQ}$ respectively.
Therefore we will assume that $X$ is a closed subset of $[0, 1]$ and $\cQ$ is a closed and open partition of $X$.
Equipped with the product topology, $X^G$ is a compact metrizable space.
The shift action of $G$ on $X^G$ is continuous, and $\mu^G$ is a Borel probability measure on $X^G$.
We write $\alpha$ for this action as applied to $C(X^G )$, following the notational convention 
from the introduction.

Denote by $p$ the coordinate function on $X$, i.e. $p(x)=x$ for $x\in X$. Then $\cP:=\{p, 1-p\}$ is a partition of unity in $C(X)$ generating $C(X)$ as a unital $C^*$-algebra. Denote by $e$ the identity element of $G$. Via the coordinate map $X^G\rightarrow X$ sending $(x_g)_{g\in G}$ to $x_e$, we will also think of $\cP$ as a partition of unity in $C(X^G)$. Then $\cP$ dynamically generates $C(X^G)$,
and so according to the introduction we have
\[
h_{\Sigma, \mu^G}(X^G, G)=\sup_{\varepsilon>0}\inf_F\inf_{m\in \Nb}\inf_{\delta>0}\limsup_{i\to\infty} \frac{1}{d_i}
\log N_{\varepsilon} (\Hom_{\mu^G}^{X^G} (\cP ,F,m,\delta ,\sigma_i ),\rho_{\cP , \infty} ),
\]
where $F$ ranges over all nonempty finite subsets of $G$ and the pseudometric $\rho_{\cP ,\infty}$
on the set of unital homomorphisms $C(X^G)\to\Cb^{d_i}$ is given by
\begin{align*}
\rho_{\cP ,\infty} (\varphi , \psi ) &= \max_{f\in\cP} \| \varphi (f) - \psi (f) \|_\infty .
\end{align*}
Take an $\varepsilon > 0$ which is smaller than the minimum over all distinct $Q, Q'\in \cQ$ of the quantities
$\min_{x\in Q, y\in Q'}|x-y|$.
Then it suffices to show
\[\inf_F\inf_{m\in \Nb}\inf_{\delta>0}\limsup_{i\to\infty} \frac{1}{d_i} \log N_{\varepsilon} (\Hom_{\mu^G}^{X^G} (\cP ,F,m,\delta ,\sigma_i ),\rho_{\cP , \infty} )\ge H_{\mu}(\cQ)-\kappa\]
for every $\kappa>0$.

So let $\kappa > 0$.
Let $F$ be a nonempty finite subset of $G$, $m\in \Nb$, and $\delta>0$.
Note that, for each $f\in \cP_{F, m}$, the value of $x\in X^G$ under $f$ depends
only on the coordinates of $x$ at $g$ for $g\in F$. Thus we can find
an $\eta>0$ such that whenever the coordinates of two points $x, y\in X^G$ at $g$ are within $\eta$ of each other
for each $g\in F$, one has $|f(x)-f(y)|<\delta/3$ for all $f\in \cP_{F, m}$.

Take a finite measurable partition $\cR$ of $X$ finer than $\cQ$ such that each atom of $\cR$ ha diameter less than $\eta$.
For each $R\in \cR$ take a point $x_R$ in $R$.
Let $\sigma$ be a map from $G$ to $\Sym(d)$ for some $d\in \Nb$.
For each $\beta\in \cR^{\{1, \dots, d\}}$ take a map $\Theta_{\beta}:\{1, \dots, d\}\rightarrow X^G$ such that for each $a\in \{1, \dots, d\}$ and $g\in F$,
the coordinate of $\Theta_{\beta}(a)$ at $g$ is $x_{\beta(\sigma(g)^{-1}a)}$. Then we have  a unital homomorphism $\Gamma(\beta):C(X^G)\rightarrow \Cb^d$ sending
$f$ to $f\circ \Theta_{\beta}$. Denote by $Z$ the set of $a$ in $\{1, \dots, d\}$ such that $\sigma(e)^{-1}\sigma(g)^{-1}a=\sigma(g)^{-1}a$ for all $g\in F$.
For every $\beta \in \cR^{\{1, \dots, d\}}$, $f\in \cP$ and $a\in Z$, we have
\begin{align*}
(\Gamma(\beta)(\alpha_g(f)))(a)
&=\alpha_g(f)(\Theta_{\beta}(a))=f(g^{-1}\Theta_{\beta}(a))=f((\Theta_{\beta}(a))_g)=f(x_{\beta(\sigma(g)^{-1}a)}),
\end{align*}
and
\begin{align*}
(\sigma(g)\Gamma(\beta)(f))(a)
&=(\Gamma(\beta)(f))(\sigma(g)^{-1}a)=f(\Theta_{\beta}(\sigma(g)^{-1}a))=f((\Theta_{\beta}(\sigma(g)^{-1}a))_e)\\
&=f(x_{\beta(\sigma(e)^{-1}\sigma(g)^{-1}a)})=f(x_{\beta(\sigma(g)^{-1}a)}),
\end{align*}
and hence
\[
(\Gamma(\beta)(\alpha_g(f)))(a)=(\sigma(g)\Gamma(\beta)(f))(a).
\]
When $\sigma$ is a good enough sofic approximation of $G$, one has $1-|Z|/d<\delta^2$, and hence
\[
\|\Gamma(\beta)(\alpha_g(f))-\sigma(g)\Gamma(\beta)(f)\|_2<\delta
\]
for all $\beta \in \cR^{\{1, \dots, d\}}$ and $f\in \cP$.

For each $\varphi\in \cR^F$, denote by $Y_{\varphi}$ the set of $x$ in $X^G$ whose coordinate
at $g$ is in $\varphi(g)$ for every $g\in F$.
Then $\{Y_{\varphi}:\varphi \in \cR^F\}$ is a Borel partition of $X^G$.
For each $\varphi\in \cR^F$ pick a $y_{\varphi}\in Y_{\varphi}$ such that the coordinate of $y_{\varphi}$ at $g$ is $x_{\varphi(g)}$ 
for each $g\in F$.
By our choice of $\eta$ and $\cR$, we have
\[
\sup_{x\in Y_{\varphi}}\sup_{f\in \cP_{F, m}}|f(x)-f(y_{\varphi})|\le \delta/3
\]
for every $\varphi \in \cR^F$.
For every $\beta \in \cR^{\{1, \dots, d\}}$ and $a\in \{1, \dots, d\}$, define $\psi_{\beta, a}\in \cR^F$ by $\psi_{\beta, a}(g)=\beta(\sigma(g)^{-1}a)$. Note that the coordinates of $\Theta_{\beta}(a)$ and $y_{\psi_{\beta, a}}$ at $g$ are the same
for each $g\in F$. For every $\beta\in \cR^{\{1, \dots, d\}}$ and $\varphi\in \cR^F$, one has
\begin{align*}
\big\{a\in \{1, \dots, d\}: \psi_{\beta, a}=\varphi\big\}&=\bigcap_{g\in F}\big\{a\in \{1, \dots, d\}: \psi_{\beta, a}(g)=\varphi(g)\big\}\\
&=\bigcap_{g\in F}\big\{a\in \{1, \dots, d\}: \beta(\sigma(g)^{-1}a)=\varphi(g)\big\}\\
&=\bigcap_{g\in F}\sigma(g)\beta^{-1}(\varphi(g)).
\end{align*}
Thus for every $\beta\in \cR^{\{1, \dots, d\}}$ and $f\in \cP_{F, m}$ one has
\begin{align*}
\zeta(\Gamma(\beta)(f))&=\frac{1}{d}\sum_{a=1}^d\Gamma(\beta)(f)(a)=\frac{1}{d}\sum_{a=1}^df(\Theta_{\beta}(a))=\frac{1}{d}\sum_{a=1}^df(y_{\psi_{\beta, a}})\\
&= \sum_{\varphi\in \cR^F}f(y_{\varphi})\zeta\big(\big\{a\in \{1, \dots, d\}: \psi_{\beta, a}=\varphi\big\}\big)\\
&= \sum_{\varphi\in \cR^F}f(y_{\varphi})\zeta\bigg(\bigcap_{g\in F}\sigma(g)\beta^{-1}(\varphi(g))\bigg).
\end{align*}

Let $\tau$ be a strictly positive number satisfying $\tau<\delta/3$ to be further specified in a moment.
Set
\[
W=\bigg\{\beta\in \cR^{\{1, \dots, d\}}:\sum_{\varphi \in \cR^F}\bigg|\prod_{g\in F}\mu(\varphi(g))
-\zeta\bigg(\bigcap_{g\in F}\sigma(g)\beta^{-1}(\varphi(g))\bigg)\bigg|\le \tau\bigg\}.\]
For every $\beta\in W$ and $f\in \cP_{F, m}$ one has,
since $0\le f\le 1$,
\begin{align*}
\mu^G(f)&=\sum_{\varphi\in \cR^F}\int_{Y_{\varphi}} f\, d\mu^G\approx_{\delta/3}\sum_{\varphi\in \cR^F}\mu^G(Y_{\varphi}) f(y_{\varphi})=\sum_{\varphi\in \cR^F}f(y_{\varphi})\prod_{g\in F}\mu(\varphi(g))\\
&\approx_{\delta/3}\sum_{\varphi\in \cR^F}f(y_{\varphi})\zeta\bigg(\bigcap_{g\in F}\sigma(g)\beta^{-1}(\varphi(g))\bigg)=\zeta(\Gamma(\beta)(f)).
\end{align*}
Therefore, when $\sigma$ is a good enough sofic approximation of $G$, the homomorphism $\Gamma(\beta)$ belongs to
$\Hom^{X^G}_{\mu^G}(\cP, F, m, \delta)$ for every $\beta \in W$.

Let $\beta\in W$, and let us estimate the number of
$\gamma \in W$ satisfying $\rho_{\cP, \infty}(\Gamma(\beta), \Gamma(\gamma))<\varepsilon$.
Note that
\begin{align*}
\rho_{\cP, \infty}(\Gamma(\beta), \Gamma(\gamma))&=\max_{a\in \{1, \dots, d\}}|p(\Theta_{\beta}(a))-p(\Theta_{\gamma}(a))|\\
&=\max_{a\in \{1, \dots, d\}}|x_{\beta(\sigma(e)^{-1}a)}-x_{\gamma(\sigma(e)^{-1}a)}|\\
&=\max_{a\in \{1, \dots, d\}}|x_{\beta(a)}-x_{\gamma(a)}|.
\end{align*}
Thus $\beta(a)$ and $\gamma(a)$ must be contained in the same atom of $\cQ$ for each $a\in \{1, \dots, d\}$.

For $Q\in \cQ$ denote by $\cR_Q$ the set of atoms of $\cR$ contained in $Q$. 
Thinking of $\beta$ as a partition of $\{1, \dots, d\}$ indexed by $\cR$, we see that $\{\bigcap_{g\in F}\sigma(g)\beta^{-1}(\varphi(g)): \varphi\in \cR^F\}$ is a partition of $\{1, \dots, d\}$. 
Let $Q\in \cQ$, $R\in \cR$, and $g_1\in F$. Then
\begin{align*}
\sigma(g_1)\beta^{-1}(R)=\bigcup_{\varphi\in \cR^F, \hspace*{0.3mm}\varphi(g_1)=R}\hspace*{2mm}
\bigcap_{g\in F}\sigma(g)\beta^{-1}(\varphi(g)),
\end{align*}
and hence
\begin{align*}
\zeta(\beta^{-1}(R))=\zeta(\sigma(g_1)\beta^{-1}(R))\approx_{\tau}
\sum_{\varphi\in \cR^F, \hspace*{0.3mm} \varphi(g_1)=R}\hspace*{2mm}\prod_{g\in F}\mu(\varphi(g))=\mu(R),
\end{align*}
and
\begin{align*}
\zeta(\beta^{-1}(\cR_Q))=\zeta\bigg(\bigcup_{R\in \cR_Q}\sigma(g_1)\beta^{-1}(R)\bigg)
&\approx_{\tau} \sum_{\varphi\in \cR^F , \hspace*{0.3mm}\varphi(g_1)\in \cR_Q}\hspace*{2mm}\prod_{g\in F}\mu(\varphi(g)) \\
&=\mu(\cup \cR_Q)=\mu(Q).
\end{align*}
Similarly, we have $\zeta(\gamma^{-1}(R))\approx_{\tau}\mu(R)$ for every $R\in \cR$. The conclusion in the
last paragraph can be restated as saying that $\beta^{-1}(\cR_Q)=\gamma^{-1}(\cR_Q)$ for every $Q\in \cQ$.
Hence the number of possibilities for $\gamma$ is bounded above by
$\prod_{Q\in \cQ}M_{\beta, Q}$, where listing the atoms of $\cR_Q$ as $R_1, \dots, R_n$ we have
\begin{align*}
M_{\beta, Q}&:=\sum_{j_1, \dots, j_n}\binom{|\beta^{-1}(\cR_Q)|}{j_1}\binom{|\beta^{-1}(\cR_Q)|-j_1}{j_2}
\dots \binom{|\beta^{-1}(\cR_Q)|-\sum_{k=1}^{n-1}j_k}{j_n}\\
&=\sum_{j_1, \dots, j_n}\frac{|\beta^{-1}(\cR_Q)|!}{j_1!j_2!\cdots j_n!},
\end{align*}
where the sum ranges over all nonnegative integers $j_1, \dots, j_n$ such that
$|j_k/d-\mu(R_k)|\le \tau$ for all $1\le k\le n$ and $\sum_{k=1}^nj_k=|\beta^{-1}(\cR_Q)|$.
Setting $\xi(t)=-t\log t$ for $0\le t\le 1$, for such $j_1, \dots, j_n$ we have, by Stirling's approximation,
\begin{align*}
\frac{|\beta^{-1}(\cR_Q)|!}{j_1!j_2!\cdots j_n!}
\le Cd\exp\bigg(\bigg(\sum_{k=1}^n\xi(j_k/d)-\xi(|\beta^{-1}(\cR_Q)|/d)\bigg)d\bigg)
\end{align*}
for some constant $C>0$ independent of $|\beta^{-1}(\cR_Q)|$ and $j_1, \dots, j_n$.
Since the function $\xi$ is uniformly continuous,
when $\tau$ is small enough one has
\begin{align*}
\sum_{k=1}^n\xi(j_k/d)-\xi(|\beta^{-1}(\cR_Q)|/d)<\sum_{R\in \cR_Q}\xi(\mu(R))-\xi(\mu(Q))+\kappa/|\cQ|
\end{align*}
for all $j_1, \dots, j_n$ as above.
 Therefore
\begin{align*}
M_{\beta, Q}&\le Cd(2\tau d)^{|\cR_Q|}\exp\bigg(\bigg(\sum_{R\in \cR_Q}\xi(\mu(R))-\xi(\mu(Q))+\kappa/|\cQ|\bigg)d\bigg)
\end{align*}
for every $Q\in \cQ$, and hence
\begin{align*}
\prod_{Q\in \cQ}M_{\beta, Q}\le C^{|\cQ|}d^{|\cQ|}(2\tau d)^{|\cR|}\exp\bigg(\bigg(H_{\mu}(\cR)-H_{\mu}(\cQ)
+\kappa\bigg)d\bigg).
\end{align*}

Now we have
\begin{align*}
\lefteqn{N_{\varepsilon}(\Hom_{\mu^G}^{X^G} (\cP ,F,m,\delta ,\sigma),\rho_{\cP , \infty} )}\hspace*{20mm} \\
\hspace*{20mm} &\ge |W|/\max_{\beta \in W}\prod_{Q\in \cQ}M_{\beta, Q}\\
&\ge |W|C^{-|\cQ|}d^{-|\cQ|}(2\tau d)^{-|\cR|}\exp\bigg(\bigg(-H_{\mu}(\cR)+H_{\mu}(\cQ)-\kappa\bigg)d\bigg).
\end{align*}
Using Lemma~\ref{L-finite lower bound} we thus obtain
\begin{align*}
\lefteqn{\limsup_{i\to\infty} \frac{1}{d_i} \log
N_{\varepsilon} (\Hom_{\mu^G}^{X^G} (\cP ,F,m,\delta ,\sigma_i ),\rho_{\cP , \infty} )}\hspace*{15mm} \\
\hspace*{15mm} &\ge -H_{\mu}(\cR)+H_{\mu}(\cQ)-\kappa
+\limsup_{i\to\infty} \frac{1}{d_i} \log \bigg|\bigg\{\beta\in \cR^{\{1, \dots, d_i\}}: \\
&\hspace*{30mm} \sum_{\varphi \in \cR^F}\bigg|\prod_{g\in F}\mu(\varphi(g))
-\zeta\bigg(\bigcap_{g\in F}\sigma_i(g)\beta^{-1}(\varphi(g))\bigg)\bigg|\le \tau\bigg\}\bigg|\\
&\ge H_{\mu}(\cQ)-\kappa,
\end{align*}
as desired.
\end{proof}

For a standard probability space $(X, \mu)$, the entropy $H(\mu)$ is defined as the supremum of $H_{\mu}(\cQ)$
over all finite measurable partitions $\cQ$ of $X$.
In the case $H(\mu)<+\infty$, the following theorem is Theorem~8.1 of \cite{Bow10} 
in conjunction with Theorem~3.6 of \cite{KerLi10T}. The case $H(\mu)=+\infty$ is a consequence of 
Lemma~\ref{L-lower bound}. When $G$ is amenable we recover a standard computation of classical measure entropy,
in view of \cite{KerLi10A}.

\begin{theorem} \label{T-entropy of Bernoulli}
Let $G$ be a countable sofic group. Let $\Sigma$ be a sofic approximation sequence for $G$.
Let $(X, \mu)$ be a standard probability space. Then $h_{\Sigma, \mu^G}(X^G, G)=H(\mu)$.
\end{theorem}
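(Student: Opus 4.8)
The plan is to split the argument according to whether $H(\mu)$ is finite or infinite, since only the infinite case draws on the work carried out above.

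In the case $H(\mu)<+\infty$ I would simply assemble the two results quoted in the remark preceding the statement. Theorem~8.1 of \cite{Bow10} evaluates Bowen's sofic measure entropy of the Bernoulli action $(X^G,\mu^G)$ to be exactly $H(\mu)$ for every sofic approximation sequence, while Theorem~3.6 of \cite{KerLi10T} identifies that quantity with the operator-algebraic entropy $h_{\Sigma,\mu^G}$ used here. Chaining the two equalities gives $h_{\Sigma,\mu^G}(X^G,G)=H(\mu)$ with no further input.

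The real content is the case $H(\mu)=+\infty$, and here the strategy is to feed Lemma~\ref{L-lower bound} into the definition of $H(\mu)$. That lemma furnishes, for every finite measurable partition $\cQ$ of $X$, the bound $h_{\Sigma,\mu^G}(X^G,G)\ge H_\mu(\cQ)$. Taking the supremum over all such $\cQ$ and recalling that $H(\mu)=\sup_{\cQ}H_\mu(\cQ)$, I would obtain $h_{\Sigma,\mu^G}(X^G,G)\ge H(\mu)=+\infty$; since no value exceeds $+\infty$, equality is automatic and no matching upper bound is required.

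Thus the decisive step is not in the theorem itself but in Lemma~\ref{L-lower bound}, where one must manufacture, from a partition $\cR$ refining $\cQ$ with small-diameter atoms, a large family of approximately equivariant unital homomorphisms $C(X^G)\to\Cb^{d_i}$ and then separate and count them using Bowen's finite lower bound (Lemma~\ref{L-finite lower bound}). Granting that estimate, the passage to the supremum over partitions is exactly the device that promotes unbounded partition entropy to infinite sofic entropy, which is all the infinite-entropy case demands.
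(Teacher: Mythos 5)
Your proposal matches the paper's own argument exactly: the finite-entropy case is handled by combining Theorem~8.1 of \cite{Bow10} with Theorem~3.6 of \cite{KerLi10T}, and the infinite-entropy case follows from Lemma~\ref{L-lower bound} by taking the supremum of $H_\mu(\cQ)$ over finite measurable partitions $\cQ$. Nothing is missing; the substantive work is indeed all in Lemma~\ref{L-lower bound}, as you note.
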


As a consequence of Theorem~\ref{T-entropy of Bernoulli} we obtain
the following result, which was proved by Bowen in the case that
$H(\mu)+H(\nu)<+\infty$ or $G$ is a countable sofic Ornstein group \cite[Thm.\ 1.1 and Cor.\ 1.2]{Bow10}.
We note that it is not known whether there are countably infinite groups that are not Ornstein.

\begin{theorem} \label{T-isomorphism of Bernoulli}
Let $G$ be a countable sofic group.
Let $(X, \mu)$ and $(Y, \nu)$ be standard probability spaces. If $(G, X^G, \mu^G)$ and $(G, Y^G, \nu^G)$ are isomorphic, 
then $H(\mu)=H(\nu)$.
\end{theorem}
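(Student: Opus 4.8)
The plan is to deduce this immediately from Theorem~\ref{T-entropy of Bernoulli} by invoking the fact that sofic measure entropy is an invariant of measure conjugacy. Since $G$ is a countable sofic group, it admits at least one sofic approximation sequence $\Sigma$, and I would fix such a $\Sigma$ once and for all.

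First I would record that $h_{\Sigma ,\cdot}(\cdot ,G)$ is unchanged under isomorphism of measure-preserving $G$-actions on standard probability spaces. This is a foundational property of the invariant, established in \cite{Bow10} and \cite{KerLi10T}, and it applies directly to the hypothesized isomorphism between $(G, X^G ,\mu^G )$ and $(G, Y^G ,\nu^G )$, giving
\[
h_{\Sigma ,\mu^G}(X^G ,G) = h_{\Sigma ,\nu^G}(Y^G ,G).
\]
Next I would apply Theorem~\ref{T-entropy of Bernoulli} to each side, which identifies the left-hand quantity with $H(\mu )$ and the right-hand quantity with $H(\nu )$, yielding $H(\mu ) = H(\nu )$ and completing the argument.

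There is no genuine obstacle here: the entire analytic content has already been absorbed into Theorem~\ref{T-entropy of Bernoulli}, whose hard half is the infinite-entropy lower bound supplied by Lemma~\ref{L-lower bound}. The only points that merit a moment's attention are the existence of a sofic approximation sequence (guaranteed by the soficity hypothesis on $G$) and the conjugacy-invariance of $h_{\Sigma ,\cdot}$, both of which are standard; moreover, since the common value $H(\mu )=H(\nu )$ is independent of the chosen sequence, no particular $\Sigma$ plays a distinguished role and any fixed choice suffices.
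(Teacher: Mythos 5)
Your argument is correct and is exactly how the paper obtains this result: it is stated as an immediate consequence of Theorem~\ref{T-entropy of Bernoulli}, using the existence of a sofic approximation sequence for $G$ and the isomorphism-invariance of $h_{\Sigma,\cdot}$. Nothing further is needed.
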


The next lemma follows from Proposition~5.3 of \cite{Bow10} (taking $\beta$ there to be the trivial partition)
and Theorem~3.6 of \cite{KerLi10T}.

\begin{lemma} \label{L-upper bound}
Let $G$ be a countable sofic group. Let $\Sigma$ be a sofic approximation sequence for $G$.
Let $G$ act on a standard probability space $(X,\mu)$ by measure-preserving transformations.
Let $\cQ$ be a generating countable measurable partition of $X$. Then $h_{\Sigma, \mu}(X, G)\le H_{\mu}(\cQ)$.
\end{lemma}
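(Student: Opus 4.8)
The plan is to derive the bound directly from the $\Hom$-formulation recalled in the introduction, treating the upper bound as a counting problem dual to the lower bound of Lemma~\ref{L-lower bound}. We may assume $H_\mu(\cQ)<+\infty$, since otherwise there is nothing to prove. First I would recast the system topologically: represent $(X,\mu)$ as a compact metrizable space carrying a continuous $G$-action, chosen so that every atom of $\cQ$ is clopen and so that a single coordinate function $p\in C(X)$ separates the atoms, with the $p$-values of distinct atoms lying in disjoint subintervals of $[0,1]$ accumulating only at one point. Then $\cP:=\{p,1-p\}$ is a finite partition of unity generating $C(X)$, hence dynamically generating, and so by the introduction the entropy $h_{\Sigma,\mu}(X,G)$ is computed by the displayed formula with this $\cP$. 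Since $h_{\Sigma,\mu}(X,G)$ is a measure-conjugacy invariant, this topological recasting is harmless.

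The core observation is that a unital homomorphism $\varphi\colon C(X)\to\Cb^{d}$ is precisely evaluation at a tuple $(x_1,\dots,x_d)\in X^{d}$, so that an element of $\Hom^X_\mu(\cP,F,m,\delta,\sigma_i)$ is a tuple whose empirical distribution approximates $\mu$ on $\cP_{F,m}$. To each such $\varphi$ I would associate the labeling $a\mapsto$ (the atom of $\cQ$ containing $x_a$). Condition (i), applied through polynomial approximations of the clopen indicators $\mathbf 1_Q$ by products of $p$ and $1-p$ (available once $m$ is large, with $F=\{e\}$ sufficing), forces the empirical marginal of this labeling to lie within a small $\tau$ of $(\mu(Q))_{Q\in\cQ}$. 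Because the $p$-values of distinct atoms are separated, two homomorphisms with $\rho_{\cP,\infty}(\varphi,\psi)<\varepsilon$ must assign $x_a$ and $y_a$ to the same atom for every atom whose $p$-image is $\varepsilon$-separated from the others; only finitely many large atoms $Q_1,\dots,Q_n$ are resolved at scale $\varepsilon$, the remaining mass being lumped into a single tail class. In particular the equivariance condition (ii) plays no role in the counting: only the generation of $\cP$, which legitimizes the formula, and the marginal control from (i) are used.

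The counting step then runs as in Lemma~\ref{L-lower bound}, but as an upper estimate. An $\varepsilon$-separated set injects into the set of labelings $\{1,\dots,d_i\}\to\{Q_1,\dots,Q_n,\mathrm{tail}\}$ whose class sizes $k_j$ satisfy $|k_j/d_i-\mu(Q_j)|\le\tau$, and by Stirling's approximation, exactly as in the proof of Lemma~\ref{L-lower bound}, the number of such labelings is at most $\exp\!\big(d_i(H_\mu(\cQ_n')+\kappa)\big)$ up to a subexponential factor, where $\cQ_n'=\{Q_1,\dots,Q_n,\mathrm{tail}\}$. Since $H_\mu(\cQ_n')\le H_\mu(\cQ)$, taking $F=\{e\}$, $m$ large, and $\delta$ small gives $\limsup_{i\to\infty}\frac1{d_i}\log N_\varepsilon(\Hom^X_\mu(\cP,F,m,\delta,\sigma_i),\rho_{\cP,\infty})\le H_\mu(\cQ)+\kappa$, and letting $\kappa\to 0$ and then passing to $\sup_\varepsilon$ yields $h_{\Sigma,\mu}(X,G)\le H_\mu(\cQ)$.

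The step I expect to be the main obstacle is the passage from the countable partition $\cQ$ to its finite truncations while keeping the approximations consistent: one must arrange the topological model and the scale $\varepsilon$ so that the finitely many atoms resolved at scale $\varepsilon$ carry almost all of $H_\mu(\cQ)$, so that condition (i) on the products $\cP_{F,m}$ genuinely pins down the frequencies of those clopen atoms, and so that the tail contributes negligibly to both the separation and the Stirling count. Alternatively, and more economically, the lemma follows by combining Proposition~5.3 of \cite{Bow10} (with the trivial partition in place of $\beta$), which supplies this upper bound within Bowen's partition-based framework, with the identification of the two entropy theories in Theorem~3.6 of \cite{KerLi10T}; this is the route indicated in the text.
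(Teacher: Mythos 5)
Your proposal is correct, and in fact it contains two arguments: the short one at the end (cite Proposition~5.3 of \cite{Bow10} with trivial $\beta$ together with Theorem~3.6 of \cite{KerLi10T}) is exactly the paper's proof, which consists of nothing more than that citation. The direct argument occupying the bulk of your write-up is a genuinely different and self-contained route: it stays entirely inside the $\Hom$-formulation and mirrors the Stirling count from the proof of Lemma~\ref{L-lower bound}, rather than passing through Bowen's partition-based entropy and the equivalence theorem. What it buys is independence from Theorem~3.6 of \cite{KerLi10T}; what it costs is the need to set up the topological model (the itinerary map into $\overline{A}^G$ for a countable set $A\subseteq[0,1]$ accumulating at one point, which is a measure isomorphism precisely because $\cQ$ is generating) and to check two small points carefully. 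First, the injection of an $\varepsilon$-separated set into labelings requires the implication ``same labeling into $\{Q_1,\dots,Q_n,\mathrm{tail}\}$ implies $\rho_{\cP,\infty}<\varepsilon$,'' which holds in your model because resolved atoms have identical $p$-values and the tail has $p$-diameter less than $\varepsilon$; your phrasing states the converse implication, but the count you actually perform uses the correct direction. Second, extracting the empirical frequencies of the clopen atoms from condition (i) via polynomial approximation of the indicators in $p$ is legitimate only because the order of infima lets you choose $\delta$ after $m$ (the coefficient sum of the approximating polynomial is fixed once $m$ is), and because for an upper bound you are free to discard condition (ii) entirely. With those points in place your direct argument goes through and gives $\limsup_i d_i^{-1}\log N_\varepsilon\le H_\mu(\cQ'_n)+\kappa\le H_\mu(\cQ)+\kappa$ as claimed.
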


The following theorem is a consequence of Theorem~\ref{T-entropy of Bernoulli} and Lemma~\ref{L-upper bound}.
In the case that $G$ is amenable it is a well-known consequence of classical entropy theory,
and in the case that $G$ contains the free group $F_2$ it was proved by
Bowen in \cite[Thm.\ 1.4]{Bow10}. As mentioned in the introduction, there exist
countable sofic groups that lie outside of these two classes \cite{Ers08,Osi09}.

\begin{theorem} \label{T-countable generator}
Let $G$ be a countable sofic group.
Let $(X, \mu)$ be a standard probability space with $H(\mu)=+\infty$. Then there is no generating countable measurable partition 
$\cQ$ of $X^G$ such that $H_{\mu^G}(\cQ)<+\infty$.
\end{theorem}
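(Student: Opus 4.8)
The plan is to deduce the statement by contradiction from the entropy computation in Theorem~\ref{T-entropy of Bernoulli} together with the general upper bound in Lemma~\ref{L-upper bound}. First I would fix any sofic approximation sequence $\Sigma = \{\sigma_i : G\to\Sym(d_i)\}_{i=1}^\infty$ for $G$, which exists precisely because $G$ is assumed to be sofic. Then I would suppose, toward a contradiction, that there does exist a generating countable measurable partition $\cQ$ of $X^G$ with $H_{\mu^G}(\cQ) < +\infty$.

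The next step is to apply Lemma~\ref{L-upper bound} to the Bernoulli action of $G$ on $(X^G,\mu^G)$, taking the generating countable partition there to be $\cQ$. Since this action is measure-preserving and $\cQ$ is generating by assumption, the lemma gives
\[
h_{\Sigma, \mu^G}(X^G, G) \le H_{\mu^G}(\cQ) < +\infty .
\]
On the other hand, Theorem~\ref{T-entropy of Bernoulli} evaluates the sofic measure entropy of the Bernoulli action as $h_{\Sigma, \mu^G}(X^G, G) = H(\mu)$, and the hypothesis $H(\mu) = +\infty$ then forces $h_{\Sigma, \mu^G}(X^G, G) = +\infty$. Combining the two displays yields $+\infty \le H_{\mu^G}(\cQ) < +\infty$, which is absurd. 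Hence no finite-entropy generating countable partition of $X^G$ can exist, which is the assertion of the theorem.

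As for where the difficulty lies, there is essentially no obstacle at this final stage, since the statement is an immediate consequence of results already in hand. The only matters to check are purely formal: that $\Sigma$ may be chosen arbitrarily (both cited results hold for an arbitrary sofic approximation sequence, so the particular choice is immaterial to the conclusion), and that a generating countable measurable partition $\cQ$ of $X^G$ is exactly the input required by Lemma~\ref{L-upper bound}. All the genuine content is carried by the infinite-entropy lower bound of Lemma~\ref{L-lower bound}, which feeds into Theorem~\ref{T-entropy of Bernoulli}; once that computation is available, the nonexistence of a finite-entropy countable generator follows directly.
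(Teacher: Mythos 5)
Your argument is correct and is exactly the deduction the paper intends: it states that Theorem~\ref{T-countable generator} is a consequence of Theorem~\ref{T-entropy of Bernoulli} and Lemma~\ref{L-upper bound}, which is precisely the contradiction you set up. No issues.
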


\end{document}